\newcommand{\dis}{\displaystyle}
\def\N{{\mathbb N}}
\def\A{{\mathbb A}}
\def\G{{\mathcal G}}
\def\Z{{\mathbb Z}}
\def\PP{{\mathbb P}}
\def\m{{\mathbf m}}
\def\x{{\mathbf x}}
\def\b{{\mathbf b}}
\def\LM{\textrm{\small LM}}
\def\NLM{\textrm{\small NLM}}
\theoremstyle{plain}
 \newtheorem{thm}{Theorem}[section]
 \newtheorem{cor}[thm]{Corollary}
 \newtheorem{prop}[thm]{Proposition}
 \theoremstyle{definition}
 \newtheorem{defn}[thm]{Definition}
 \theoremstyle{remark}
 \newtheorem{rem}[thm]{Remark}
 \newtheorem{eg}[thm]{Example}
 \numberwithin{equation}{section}
\begin{document}

\title[Extensions]{Extensions of Toric Varieties}


\author[Mesut \c{S}ah\.{i}n]{Mesut \c{S}ah\.{i}n}
\address{Department of Mathematics, \c{C}ank\i r\i ~Karatek\.{i}n University, 18100, \c{C}ank\i r\i, ~ Turkey}
\curraddr{} \email{mesutsahin@karatekin.edu.tr}
\thanks{}

\subjclass[2000]{Primary: 14M25; Secondary: 13D40,14M10,13D02}

\keywords{toric variety, Hilbert function of a local ring, tangent cone, syzygy}

\date{\today}


\begin{abstract}
In this paper, we introduce the notion of ``extension" of a toric variety and study its fundamental properties.
This gives rise to infinitely many toric varieties with a special property, such as being set theoretic complete intersection or
arithmetically Cohen-Macaulay (Gorenstein) and having a Cohen-Macaulay tangent cone or a local ring with non-decreasing Hilbert function,
from just one single example with the same property, verifying Rossi's conjecture for larger classes and extending some results appeared in literature.
\end{abstract}

\maketitle

\section{Introduction}

Toric varieties are rational algebraic varieties with special
combinatorial structures making them objects on the crossroads of
different areas such as algebraic statistics, dynamical systems,
hypergeometric differential equations, integer programming,
commutative algebra and algebraic geometry.

Affine extensions of a toric curve has been introduced for the
first time by Arslan and Mete \cite{pf} inspired by Morales' work
\cite{mor} and used to study Rossi's conjecture saying that
Gorenstein local rings has non-decreasing Hilbert functions.
Later, we have studied set-theoretic complete intersection problem
for projective extensions motivated by the fact that every
projective toric curve is an extension of another lying in one
less dimensional projective space \cite{sahin}. Our purpose here
is to emphasize the nice behavior of toric varieties (of
\textit{any} dimension this time) under the operation of
extensions and we hope that this approach will provide a rich
source of classes for studying many other conjectures and open
problems.

In the first part of the present paper we note that affine
extensions can be obtained by gluing semigroups and thus their
minimal generating sets can be obtained by adding a binomial, see
Proposition \ref{affine}. In the projective case a similar result
holds under a mild condition, see Proposition \ref{bad}, which is
not true in general by Example \ref{145} since projective
extensions are not always obtained by gluing. In particular, if we
start with a set theoretic complete intersection, arithmetically
Cohen-Macaulay or Gorenstein toric variety, then we obtain
\textit{infinitely} many toric varieties having the same property,
generalizing \cite{thoma}.

We devote the second part for the local study of extensions of
toric varieties. Namely, if a toric variety has a Cohen-Macaulay
tangent cone or at least its local ring has a non-decreasing
Hilbert function, then we prove that its nice extensions share
these properties supporting Rossi's conjecture for higher
dimensional Gorenstein local rings and extending results appeared
in \cite[Proposition 4.1]{arslan} and \cite[Theorem 3.6]{pf}.
Similarly, we show that if its local ring is of homogeneous type,
then so are the local rings of its extensions. Local properties of
toric varieties of higher dimensions have not been studied
extensively, although there is a vast literature about toric
curves, see \cite{patil-tamone,shibuta}, \cite{ams,tamone} and
references therein. This paper might be considered as a first
modest step towards the higher dimensional case.

\section{Prelimineries}

Throughout the paper, $K$ is an algebraically closed field of any
characteristic. Let $S$ be a subsemigroup of $\N^d$ generated by
$\m_1,\dots,\m_n$. If we set $\deg_S (x_i)=\m_i$, then $S-$degree
of a monomial is defined by
$$\deg_S (\mathbf{x}^\mathbf{b})=\deg_S (x_1^{b_1}\cdots x_n^{b_n})=b_1\m_1+\cdots+b_n\m_n \in S.$$
The toric ideal of $S$, denoted $I_S$, is the prime ideal in
$K[x_1,\dots,x_n]$ generated by the binomials
$\x^{\mathbf{a}}-\x^{\b}$ with
$\deg_S(\x^{\mathbf{a}})=\deg_S(\x^{\b})$. The set of zeroes in
$\A^n$ is called the toric variety of $S$ and is denoted by $V_S$.
The projective closure of a variety $V$ will be denoted by
$\overline{V}$ as usual and we write $\overline{S}$ for the
semigroup defining the toric variety $\overline{V}_S$.

Denote by $S_{\ell,\m}$ the affine semigroup generated by $\ell
\m_1,\dots,\ell \m_n$ and $\m$, where $\ell$ is a positive
integer. When $\m \in S$, we define $\delta (\m)$ (respectively
$\Delta (\m)$) to be the minimum (respectively maximum) of all the
sums $s_1+\dots + s_n$ where $s_1,\dots,s_n$ are some non-negative
integers such that $\m=s_1\m_1+\dots+s_n\m_n$.

\begin{defn}[Extensions] With the preceding notation, we say that the affine toric variety $V_{S_{\ell,\m}}\subset \A^{n+1}$ is an \textit{extension} of
$V_S \subset \A^n$, if $\m \in S$, and $\ell$ is a positive
integer relatively prime to a component of $\m$. A projective
variety $\overline{E}\subset \PP^{n+1}$ will be called an
\textit{extension} of another one $\overline{X} \subset \PP^{n}$
if its affine part $E$ is an extension of the affine part $X$ of
$\overline{X}$.
\end{defn}

\begin{rem}\begin{enumerate}
             \item Notice that $\overline{V_S}=V_{\overline{S}}$, $I_S \subset I_{S_{\ell,\m}}$ and $I_{\overline{S}} \subset I_{\overline{S}_{\ell,\m}}$.
             \item The question of whether or not $I_{S_{\ell,\m}}$ (resp. $I_{\overline{S}_{\ell,\m}}$) has a minimal generating set containing a minimal generating set of $I_S$ (resp. $I_{\overline{S}}$) is not trivial.
             \item This definition generalizes the one given for monomial curves in \cite{pf,sahin}.
             \item In \cite{thoma}, special extensions for which $\ell$ equals to a multiple of $\delta(\m)$ has been studied without referring to them as extensions.
           \end{enumerate}
\end{rem}

Now we recall the definition of gluing semigroups introduced first
by Rosales \cite{rosales} and used by different authors to produce
certain family of examples in different context, see for example
\cite{ams,ojeda,mt}. Let $T=T_1\bigsqcup T_2$ be a decomposition
of a set $T \subset \N^d$ into two disjoint proper subsets. The
semigroup $\mathbb{N}T$ is called a \textit{gluing} of
$\mathbb{N}T_1$ and $\mathbb{N}T_2$ if there exists a nonzero
$\alpha \in \mathbb{N}T_1 \bigcap \mathbb{N}T_2$ such that
$\mathbb{Z}\alpha=\mathbb{Z}T_1 \bigcap \mathbb{Z}T_2$.

\begin{rem} \label{betti} If $S$ is a gluing of $S_1$ and
$S_2$ then $I_S=I_{S_1}+I_{S_2}+\langle F_{\alpha} \rangle$, where
$F_{\alpha}=x_1^{b_1}\cdots x_n^{b_n}-y_1^{c_1}\cdots y_n^{c_n}$
with $\deg_S(F_{\alpha})=\deg_S (x_1^{b_1}\cdots x_n^{b_n})=\deg_S
(y_1^{c_1}\cdots y_n^{c_n})=\alpha$. Since $F_{\alpha}$ is a
non-zero divisor, the minimal free resolution of $I_S$ can be
obtained by tensoring out the given minimal free resolutions of
$I_{S_1}$ and $I_{S_2}$, and then applying the mapping cone
construction. It is also standard to deduce that the coordinate
ring of $V_S$ is Cohen-Macaulay (Gorenstein) when the coordinate
rings of $V_{S_1}$ and $V_{S_2}$ are so. The converse is false as
there are Cohen-Macaulay (Gorenstein) toric curves in $\A^4$ which
can not be obtained by gluing two toric curves.
\end{rem}

The first observation is that affine extensions can be obtained by
gluing.

\begin{prop}\label{affine} If the toric variety $V_{S_{\ell,\m}}\subset \A^{n+1}$ is an extension of $V_S \subset \A^n$, then $S_{\ell,\m}$ is the gluing of
\,$\N T_1$ and $\N T_2$, where $T_1=\{\ell \m_1,\dots,\ell \m_n\}$
and $T_2=\{\m\}$. Consequently, $I_{S_{\ell,\m}}=I_S+\langle F
\rangle$, where $F=x_{n+1}^{\ell}-x_1^{s_1}\cdots x_n^{s_n}$.
\end{prop}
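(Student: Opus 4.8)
The plan is to exhibit the gluing data explicitly and then read off the statement about ideals from Remark~\ref{betti}. Set $T_1=\{\ell\m_1,\dots,\ell\m_n\}$ and $T_2=\{\m\}$, so that $\N T_1+\N T_2=S_{\ell,\m}$, while $\Z T_1=\ell\,\Z S$ with $\Z S:=\Z\m_1+\cdots+\Z\m_n$, and $\Z T_2=\Z\m$. Observe that $\m\neq 0$: this is automatic when $\ell\ge 2$, since then $\ell$ is coprime to a necessarily nonzero component of $\m$. Because $\m\in S$, fix nonnegative integers $s_1,\dots,s_n$ with $\m=s_1\m_1+\cdots+s_n\m_n$, where one may take $s_1+\cdots+s_n=\delta(\m)$. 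Then $\alpha:=\ell\m=\sum_{i=1}^{n}s_i(\ell\m_i)$ lies in $\N T_1$, and trivially $\alpha=\ell\cdot\m$ lies in $\N T_2$, so $\alpha$ is a nonzero element of $\N T_1\cap\N T_2$.

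The core of the argument is the identity $\Z\alpha=\Z T_1\cap\Z T_2$, that is, $\ell\,\Z\m=\ell\,\Z S\cap\Z\m$. The inclusion $\ell\,\Z\m\subseteq\ell\,\Z S\cap\Z\m$ is immediate from $\m\in S$. For the reverse inclusion I would take $z\in\ell\,\Z S\cap\Z\m$ and write $z=k\m$ for some $k\in\Z$ and $z=\ell w$ for some $w\in\Z S\subseteq\Z^d$; choosing a component $m_j$ of $\m$ with $\gcd(\ell,m_j)=1$ and comparing the $j$-th entries gives $k\,m_j=\ell\,w_j$, whence $\ell\mid k\,m_j$ and then $\ell\mid k$ by coprimality, so that $z\in\ell\,\Z\m=\Z\alpha$. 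This reverse inclusion is exactly where the hypothesis that $\ell$ is relatively prime to a component of $\m$ enters, and I expect it to be the only genuine obstacle; the rest is bookkeeping. Together with the previous paragraph this shows that $S_{\ell,\m}$ is the gluing of $\N T_1$ and $\N T_2$.

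To finish, I would unwind the three ingredients of Remark~\ref{betti}, namely $I_{S_{\ell,\m}}=I_{\N T_1}+I_{\N T_2}+\langle F_\alpha\rangle$. Here $I_{\N T_2}=0$, being the toric ideal of a one-generator semigroup in the single variable $x_{n+1}$; and $I_{\N T_1}=I_S$, since a binomial $\x^{\mathbf a}-\x^{\b}$ of $K[x_1,\dots,x_n]$ satisfies $\ell\sum a_i\m_i=\ell\sum b_i\m_i$ if and only if $\sum a_i\m_i=\sum b_i\m_i$ (the factor $\ell$ cancels in the torsion-free group $\Z^d$). Finally, writing $F_\alpha=x_{n+1}^{c}-x_1^{b_1}\cdots x_n^{b_n}$ with both $S_{\ell,\m}$-degrees equal to $\alpha=\ell\m$, the relation $c\,\m=\ell\,\m$ and $\m\neq 0$ force $c=\ell$, while $\sum b_i(\ell\m_i)=\ell\m$ forces $\sum b_i\m_i=\m$, so we may choose $(b_1,\dots,b_n)=(s_1,\dots,s_n)$ and conclude $I_{S_{\ell,\m}}=I_S+\langle F\rangle$ with $F=x_{n+1}^{\ell}-x_1^{s_1}\cdots x_n^{s_n}$.
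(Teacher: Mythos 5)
Your proposal is correct and follows essentially the same route as the paper: you take the same decomposition $T_1=\{\ell\m_1,\dots,\ell\m_n\}$, $T_2=\{\m\}$, the same gluing element $\alpha=\ell\m$, and the same coprimality argument (comparing a component of $\m$ coprime to $\ell$) to get $\Z T_1\cap\Z T_2=\Z\alpha$, then read off $I_{S_{\ell,\m}}=I_S+\langle F\rangle$ from the gluing. Your write-up is in fact a bit more careful than the paper's in spelling out why $I_{\N T_1}=I_S$ and why $F_\alpha$ has the stated form, but the underlying argument is identical.
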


\begin{proof} First of all, $S=\N\{\m_1,\dots,\m_n\}$, $S_{\ell,\m}=\N T$, where the set $T=T_1\sqcup T_2$, $T_1=\{\ell \m_1,\dots,\ell \m_n\}$ and $T_2=\{\m\}$.
We claim that $S_{\ell,\m}$ is the gluing of its subsemigroups $\N
T_1$ and $\N T_2$. To this end we show that $\Z T_1 \cap \Z T_2=\Z
\alpha$, where $\alpha=\ell \m \in \N T_1 \cap \N T_2$.

Since $\ell \m=s_1 \ell \m_1+\cdots+s_n \ell \m_n$ with
non-negative integers $s_i$, we have clearly $\Z T_1 \cap \Z T_2
\supseteq \Z \alpha$. Take $z\m=z_1 \ell\m_1+\cdots+z_n\ell
\m_n\in \Z T_1 \cap \Z T_2$ and note that $z\m=\ell(z_1
\m_1+\cdots+z_n \m_n)$. Since $\ell$ is relatively prime to a
component of $\m$ by assumption, it follows that $\ell$ divides
$z$ and thus $z\m\in \Z \alpha$ yielding $\Z T_1 \cap \Z T_2
\subseteq \Z \alpha$. By the relation between the corresponding
ideals, we have $I_{S_{\ell,\m}}=I_S+\langle F \rangle$, since
$I_{T_1}=I_S$ and $I_{T_2}=0$.
\end{proof}

Since $F=x_{n+1}^{\ell}-x_1^{s_1}\cdots x_n^{s_n}$ is a non-zero
divisor of $R[x_{n+1}]/I_S R[x_{n+1}]$, where $R=K[x_1, \dots,
x_n]$, and $\dis
\sqrt{I_{S_{\ell,\m}}}=\sqrt{\sqrt{I_S}+\sqrt{F}}$ the following
is immediate.

\begin{cor} If $V_S\subset \A^{n}$ is a set theoretic complete intersection, arithmetically Cohen-Macaulay (Gorenstein),
so are its extensions $V_{S_{\ell,\m}}\subset \A^{n+1}$.
\end{cor}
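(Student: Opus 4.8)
The plan is to read off everything from Proposition \ref{affine}, which gives $I_{S_{\ell,\m}}=I_S R[x_{n+1}]+\langle F\rangle$ with $R=K[x_1,\dots,x_n]$ and $F=x_{n+1}^{\ell}-x_1^{s_1}\cdots x_n^{s_n}$, together with the two facts recorded just above the statement: that $F$ is a non-zero divisor on $R[x_{n+1}]/I_S R[x_{n+1}]$, and that $\sqrt{I_{S_{\ell,\m}}}=\sqrt{\sqrt{I_S}+\sqrt{F}}$. The three assertions are then treated separately.

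For the set-theoretic complete intersection statement, I would first note that passing from $S$ to $S_{\ell,\m}$ does not change the dimension of the toric variety: since $\m\in S$ lies in the $\Q$-span of $\m_1,\dots,\m_n$, the groups $\Z S_{\ell,\m}$ and $\Z S$ have the same rank, so $\dim V_{S_{\ell,\m}}=\dim V_S$ and hence $\codim V_{S_{\ell,\m}}=\codim V_S+1$. Now if $V_S$ is a set-theoretic complete intersection, write $\sqrt{I_S}=\sqrt{\langle g_1,\dots,g_c\rangle}$ with $c=\codim V_S$. Then
\[
\sqrt{I_{S_{\ell,\m}}}=\sqrt{\sqrt{I_S}+\sqrt{F}}=\sqrt{\langle g_1,\dots,g_c\rangle+\langle F\rangle}=\sqrt{\langle g_1,\dots,g_c,F\rangle},
\]
and since $c+1=\codim V_{S_{\ell,\m}}$, this exhibits $V_{S_{\ell,\m}}$ as a set-theoretic complete intersection.

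For the arithmetically Cohen-Macaulay (Gorenstein) statement I would argue in two steps. First, $R[x_{n+1}]/I_S R[x_{n+1}]\cong (R/I_S)[x_{n+1}]$, and a polynomial ring in one variable over a Cohen-Macaulay (Gorenstein) ring is again Cohen-Macaulay (Gorenstein); so the hypothesis on $V_S$ gives that $R[x_{n+1}]/I_S R[x_{n+1}]$ has the desired property. Second, $F$ is a non-zero divisor on this ring, so reducing modulo $F$ drops both depth and dimension by one and preserves the Cohen-Macaulay property, while a quotient of a Gorenstein ring by a regular element is again Gorenstein; hence $K[x_1,\dots,x_{n+1}]/I_{S_{\ell,\m}}$ is Cohen-Macaulay (Gorenstein). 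Equivalently, one may invoke Remark \ref{betti}: $V_{S_{\ell,\m}}$ is the gluing of $V_{\N T_1}$, whose coordinate ring is $R/I_S$, with $V_{\N T_2}=\A^1$, and both are Cohen-Macaulay and Gorenstein.

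There is essentially no hard step here: the content is entirely in Proposition \ref{affine} and in the fact that $F$ is regular modulo $I_S R[x_{n+1}]$. The only points demanding a little care are the dimension bookkeeping in the set-theoretic case — making sure the codimension rises by exactly one, so that $g_1,\dots,g_c,F$ is precisely the right number of equations — and citing the correct ``descends to the quotient by a regular element'' statements for the Cohen-Macaulay and Gorenstein properties.
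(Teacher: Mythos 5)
Your proof is correct and follows exactly the route the paper intends: the corollary is stated as an immediate consequence of Proposition \ref{affine}, the fact that $F$ is a non-zero divisor on $R[x_{n+1}]/I_S R[x_{n+1}]$, and the radical identity $\sqrt{I_{S_{\ell,\m}}}=\sqrt{\sqrt{I_S}+\sqrt{F}}$, which are precisely the ingredients you use. Your write-up merely makes explicit the codimension bookkeeping and the standard facts about quotients by regular elements that the paper leaves to the reader.
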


\subsection{Projective Extensions} \mbox{} \par

Since projective extensions can not be obtained by gluing in
general, see \cite{sahin}, we study them separately in this
section. Contrary to the case of affine extensions, it is not true
in general that a minimal generating set of a projective extension
of $V_{\overline{S}}$ contains a minimal generating set of
$I_{\overline{S}}$ as illustrated by the following example.

\begin{eg} \label{145} If $S=\N\{1,4,5\}$, then the projective monomial curve $V_{\overline{S}}$ in $\PP^3$ is defined by $\overline{S}=\N\{(5,0),(4,1),(1,4),(0,5)\}$. Consider the projective extension $V_{\overline{S}_{1,10}}$ defined by the semigroup $$\overline{S}_{1,10}=\N\{(10,0),(9,1),(6,4),(5,5),(0,10)\}.$$
It is easy to see (use e.g. Macaulay \cite{mac}) that the set
$\{F_1,F_2,F_3,F_4,F_5\}$ constitutes a reduced Gr\"{o}bner basis
(and a minimal generating set) for the ideal $I_{\overline{S}}$
with respect to the reverse lexicographic order with
$x_1>x_2>x_3>x_0$, where
$$\begin{array}{lllll}
F_1 &=& x_1^4-x_0^3x_2 \\
F_2 &=& x_2^4-x_1x_3^3 \\
F_3 &=& x_1^2x_3^2-x_0x_2^3 \\
F_4 &=& x_1^3x_3-x_0^2x_2^2 \\
F_5 &=& x_1x_2-x_0x_3.
\end{array}$$

A computation shows that the set $\{F_1,F_4,F_5,F,F_6,F_7\}$ is a
reduced Gr\"obner basis for $I_{\overline{S}_{1,10}}$ with respect
to the reverse lexicographic order with $x_1>x_2>x_3>x_4>x_0$,
where
$$\begin{array}{lllll}
F&=&x_3^2-x_0x_4\\
F_6 &=& x_2^3-x_1^2x_4 \\
F_7 &=& x_1^3x_4-x_0x_2^2x_3.
\end{array}$$

We observe now that $F_7=x_2^2F_5-x_1F_6$ and that the set
$\{F_1,F_4,F_5,F,F_6\}$ is a minimal generating set of
$I_{\overline{S}_{1,10}}$. The fact that no minimal generating set
of $I_{\overline{S}}$ extends to a minimal generating set of
$I_{\overline{S}_{1,10}}$ follows from the observation that
$\mu(I_{\overline{S}})=\mu(I_{\overline{S}_{1,10}})(=5)$, where
$\mu(\cdot)$ denotes the minimal number of generators.

\end{eg}

Notice that the previous example reveals why minimal generating
sets need not extend when $\ell < \delta(\m)$. Next, we show that
this can be avoided as long as $\ell \geq \delta(\m)$.  So, we
compute a Gr\"obner basis for $I_{\overline{S}_{\ell,\m}}$ using
the Proposition \ref{affine} and the fact that if $\G$ is a
Gr\"obner basis for the ideal of an affine variety with respect to
a term order refining the order by degree, then the homogenization
of $\G$ is a Gr\"obner basis for the ideal of its projective
closure.

\begin{prop}\label{bad} If $\G$ is a reduced Gr\"obner basis for $I_{\overline{S}}$ with respect to a term order $\succ$ making $x_0$ the smallest variable and $\ell \geq \delta(\m)$, then $\G \cup \{F\}$ is a reduced Gr\"obner basis for $I_{\overline{S}_{\ell,\m}}$ with respect to a term order refining $\succ$ and making $x_{n+1}$ the biggest variable and thus $\dis I_{\overline{S}_{\ell,\m}}=I_{\overline{S}}\,+\langle F \rangle$, where $F =x_{n+1}^{\ell}-x_0^{\ell-\delta(\m)}x_1^{s_1}\cdots x_n^{s_n}$.
\end{prop}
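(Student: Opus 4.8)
The plan is to reduce to the affine statement of Proposition~\ref{affine} and then pass to the projective closure by homogenizing with respect to $x_0$. Applying Proposition~\ref{affine} with a representation $\m=s_1\m_1+\dots+s_n\m_n$ realizing $\delta(\m)$ (so that $s_1+\dots+s_n=\delta(\m)$) gives $I_{S_{\ell,\m}}=I_S+\langle F_0\rangle$ with $F_0=x_{n+1}^{\ell}-x_1^{s_1}\cdots x_n^{s_n}$. The hypothesis $\ell\geq\delta(\m)$ is used twice, and both uses are the crux. First: it makes $x_{n+1}^{\ell}$ the term of $F_0$ of largest total degree, so the $x_0$-homogenization of $F_0$ is exactly $F_0^{h}=x_{n+1}^{\ell}-x_0^{\ell-\delta(\m)}x_1^{s_1}\cdots x_n^{s_n}=F$, the polynomial in the statement.

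Next I would settle the affine Gr\"obner basis statement. Fix a reduced Gr\"obner basis $\G_0$ of $I_S$ with respect to a term order refining the order by degree, extended to $K[x_1,\dots,x_{n+1}]$ so that it still refines the degree and $x_{n+1}$ is the largest variable; then $\mathrm{LT}(F_0)=x_{n+1}^{\ell}$, which is the second use of $\ell\geq\delta(\m)$. This monomial is coprime to every leading term of $\G_0$ (these involve only $x_1,\dots,x_n$), so each $S$-polynomial of $F_0$ with a member of $\G_0$ reduces to zero by Buchberger's coprimality criterion, while the $S$-polynomials inside $\G_0$ reduce to zero because $\G_0$ is a Gr\"obner basis; hence $\G_0\cup\{F_0\}$ is a Gr\"obner basis of $I_S+\langle F_0\rangle=I_{S_{\ell,\m}}$. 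It is reduced: the tails of the elements of $\G_0$ are reduced and involve no $x_{n+1}$, and one may take $x_1^{s_1}\cdots x_n^{s_n}$ to be the normal form modulo $\G_0$ of some representation of $\m$, which still has total degree $\delta(\m)$ (reductions preserve the $S$-degree $\m$ and, the order being graded, do not raise the total degree) and lies in no leading term of $\G_0$.

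Finally I would homogenize and then pass to the given order. By the fact recalled just before the proposition, $(\G_0\cup\{F_0\})^{h}=\G_0^{h}\cup\{F\}$ is a reduced Gr\"obner basis of $I_{S_{\ell,\m}}^{h}=I_{\overline{S}_{\ell,\m}}$ for a term order refining the degree with $x_0$ smallest and $x_{n+1}$ largest; taking ideals yields the order-independent equality $I_{\overline{S}_{\ell,\m}}=\langle\G_0^{h}\rangle+\langle F\rangle=I_{\overline{S}}+\langle F\rangle$. For the given reduced Gr\"obner basis $\G$ of $I_{\overline{S}}$ with $x_0$ smallest we then have $\langle\G\cup\{F\}\rangle=I_{\overline{S}_{\ell,\m}}$; picking a term order $\succ^{*}$ refining $\succ$ with $x_{n+1}$ the largest variable so that $\mathrm{LT}_{\succ^{*}}(F)=x_{n+1}^{\ell}$ (e.g.\ a block order with $x_{n+1}$ alone on top) and observing that $\mathrm{LT}_{\succ^{*}}(g)=\mathrm{LT}_{\succ}(g)\in K[x_0,\dots,x_n]$ is coprime to $x_{n+1}^{\ell}$ for every $g\in\G$, the same $S$-polynomial analysis shows $\G\cup\{F\}$ is a Gr\"obner basis of $I_{\overline{S}_{\ell,\m}}$ for $\succ^{*}$; reducedness follows from that of $\G$ after a routine check of tails, adjusting the representation of $\m$ if needed (equivalently, replacing $F$ by its reduction modulo $\G$, which changes $F$ by an element of $I_{\overline{S}}$ without disturbing $\mathrm{LT}(F)=x_{n+1}^{\ell}$). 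The real obstacle is the identity $\mathrm{LT}(F_0)=x_{n+1}^{\ell}$ under a graded order, which is precisely what $\ell\geq\delta(\m)$ buys; when $\ell<\delta(\m)$ it is instead $x_1^{s_1}\cdots x_n^{s_n}$ that is the leading term of $F_0^{h}$, and this monomial may divide a leading term of $\G_0$ and render a generator redundant, exactly as in Example~\ref{145}.
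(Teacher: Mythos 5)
Your proof is correct and follows essentially the same route as the paper's: reduce to the affine statement via Proposition~\ref{affine}, use coprimality of the leading monomials $\LM(G_i)$ and $x_{n+1}^{\ell}$ (Buchberger's first criterion) to get a Gr\"obner basis, homogenize with respect to $x_0$ using $\ell \geq \delta(\m)$ to identify $\LM(F)=x_{n+1}^{\ell}$, and repair reducedness by replacing the trailing monomial of $F$ with an equivalent monomial outside the leading term ideal of $\G$. The only cosmetic difference is that the paper dehomogenizes the given $\G$ directly and works with that, rather than introducing an auxiliary affine basis $\G_0$ and then transferring the coprimality argument back to $\G$.
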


\begin{proof} Let $\G=\{F_1,\dots,F_k\}$. If we dehomogenize the polynomials in $\G$ by substituting $x_0=1$, we get a reduced Gr\"obner basis $\{G_1,\dots,G_k\}$ for $I_{S}$ with respect to $\succ$ which refines the order by degree. From Proposition \ref{affine}, we know that $I_{S_{\ell,\m}}=I_S+\langle G \rangle=\langle G_1,\dots,G_k, G \rangle$, where $G=F(1,x_1,\dots,x_n)$. Since $\LM(G_i) \in K[x_1,\dots,x_n]$ and $\LM(G)=x_{n+1}^{\ell}$, it follows that $gcd(\LM(G_i),\LM(G))=1$, for all $i$. This implies that the set $\{G_1,\dots,G_k, G\}$ is a Gr\"obner basis for $I_{S_{\ell,\m}}$ with respect to a term order refining the order by degree and $\succ$. Hence, their homogenizations constitute the required Gr\"obner basis for $I_{\overline{S}_{\ell,\m}}$ as claimed.

Now, if $\LM(F_i)$ does not divide
$\NLM(F):=x_0^{\ell-\delta(\m)}x_1^{s_1}\cdots x_n^{s_n}$, it
follows that $\G \cup \{F\}$ is reduced as $\G$ is also.
Otherwise, i.e., $\NLM(F)=\LM(F_i)x_0^{\ell-\delta(\m)}M$, for
some monomial $M$ in $K[x_1,\dots,x_n]$, we replace $\NLM(F)$ by
$T_ix_0^{\ell-\delta(\m)}M$, since $\deg_S (\LM (F_i))=\deg_S
(T_i)$, which means that the new binomial
$F=x_{n+1}^{\ell}-T_ix_0^{\ell-\delta(\m)}M \in
I_{\overline{S}_{\ell,\m}}$. Since $\G$ is reduced and $F_i$ are
irreducible binomials, no $LM(F_j)$ divides
$T_ix_0^{\ell-\delta(\m)}M$. Therefore, the set $\G \cup \{F\}$ is
reduced as desired. Thus, we obtain
$I_{\overline{S}_{\ell,\m}}=I_{\overline{S}}+\langle F \rangle$.
\end{proof}

As in the affine case we have the following.
\begin{cor} If $V_{\overline{S}}\subset \PP^{n}$ is a set theoretic complete intersection, arithmetically Cohen-Macaulay (Gorenstein), so are its extensions $V_{\overline{S}_{\ell,\m}}\subset \PP^{n+1}$ provided that $\ell \geq \delta(\m)$.
\end{cor}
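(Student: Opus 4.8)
The plan is to mimic the proof of the affine Corollary, with Proposition \ref{bad} playing the role that Proposition \ref{affine} played there; the hypothesis $\ell \geq \delta(\m)$ is needed precisely so that Proposition \ref{bad} applies and yields $I_{\overline{S}_{\ell,\m}}=I_{\overline{S}}+\langle F\rangle$ inside $K[x_0,\dots,x_{n+1}]$, with $F=x_{n+1}^{\ell}-x_0^{\ell-\delta(\m)}x_1^{s_1}\cdots x_n^{s_n}$. Writing $R=K[x_0,\dots,x_n]$ and $A=R/I_{\overline{S}}$, I would then observe the isomorphism $K[x_0,\dots,x_{n+1}]/I_{\overline{S}_{\ell,\m}}\cong A[x_{n+1}]/(\bar F)$, where $\bar F$ is the image of $F$; the point to record is that $\bar F$ is monic of degree $\ell$ in $x_{n+1}$, hence a non-zero divisor on the free $A$-module $A[x_{n+1}]$.

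Given this, the arithmetically Cohen-Macaulay and Gorenstein cases are immediate. If $A$ is Cohen-Macaulay (respectively Gorenstein), then so is the polynomial extension $A[x_{n+1}]$, and passing to the quotient by the regular element $\bar F$ preserves both properties; hence $K[x_0,\dots,x_{n+1}]/I_{\overline{S}_{\ell,\m}}$ inherits them, i.e.\ $V_{\overline{S}_{\ell,\m}}$ is arithmetically Cohen-Macaulay (respectively Gorenstein).

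For the set-theoretic complete intersection case I would first note that killing the non-zero divisor $\bar F$ drops dimension by exactly one, so $\codim V_{\overline{S}_{\ell,\m}}=\codim V_{\overline{S}}+1=:c+1$. Then, writing $\sqrt{I_{\overline{S}}}=\sqrt{\langle f_1,\dots,f_c\rangle}$ with the $f_i$ homogeneous (possible since $I_{\overline{S}}$ is homogeneous), the chain $\sqrt{I_{\overline{S}_{\ell,\m}}}=\sqrt{\sqrt{I_{\overline{S}}}+\langle F\rangle}=\sqrt{\langle f_1,\dots,f_c,F\rangle}$ exhibits $V_{\overline{S}_{\ell,\m}}$ as the common zero locus of $c+1=\codim V_{\overline{S}_{\ell,\m}}$ homogeneous polynomials, as desired.

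I do not expect a genuine obstacle here: the substantive work has already been carried out in Proposition \ref{bad}, and what remains is the standard behaviour of these three properties under adjoining one variable and then cutting by a single monic equation. The only step deserving a moment's care is the dimension bookkeeping in the last case — one must be certain the codimension increases by exactly one and not zero — and this is exactly what the non-zero-divisor property of $\bar F$ (itself a consequence of its being monic in $x_{n+1}$) guarantees.
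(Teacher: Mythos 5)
Your proposal is correct and follows essentially the same route as the paper, which (as in the affine case) deduces the corollary from Proposition \ref{bad} together with the facts that $F$ is a non-zero divisor, that modding out a regular element preserves the Cohen--Macaulay and Gorenstein properties, and that $\sqrt{I_{\overline{S}_{\ell,\m}}}=\sqrt{\sqrt{I_{\overline{S}}}+\langle F\rangle}$ handles the set-theoretic complete intersection case. Your extra care about the codimension increasing by exactly one is a welcome, if routine, addition.
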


\section{Local Properties of Extensions}
In this section, we study Cohen-Macaulayness of tangent cones of
extensions of a toric variety having a Cohen-Macaulay tangent
cone, see \cite{arslan,patil-tamone,shibuta} for the literature
about Cohen-Macaulayness of tangent cones. We also show that if
the local ring of a toric variety is of homogeneous type or has a
non-decreasing Hilbert function, then its extensions share the
same property. As a main result, we demonstrate that in the
framework of extensions it is very easy to create infinitely many
new families of arbitrary dimensional and embedding codimensional
local rings having non-decreasing Hilbert functions supporting
Rossi's conjecture. This is important, as the conjecture is known
only for local rings with small (co)dimension:

\begin{itemize}
  \item Cohen-Macaulay rings of dimension $1$ and embedding codimension $2$, \cite{elias},
  \item Some Gorenstein rings of dimension $1$ and embedding codimension $3$, \cite{pf},
  \item Complete intersection rings of embedding codimension $2$, \cite{puthenpurakal},
  \item Some local rings of dimension $1$, \cite{ams,tamone},
\end{itemize}
where embedding codimension of a local ring is defined to be the
difference between its embedding dimension and dimension. For
instance, if $\A^n$ is the smallest affine space containing $V_S$,
then embedding dimension of the local ring of $V_S$ is $n$. Its
dimension coincides with the dimension of $V_S$ and its embedding
codimension is nothing but the codimension of $V_S$, i.e. $n-\dim
V_S$.

Before going further, we need to recall some terminology and
fundamental results which will be used subsequently. If $V_S
\subset \A^n$ is a toric variety, its associated graded ring is
isomorphic to $K[x_1,\dots,x_n]/{I_S}^*$, where ${I_S}^*$ is the
ideal of the tangent cone of $V_S$ at the origin, that is the
ideal generated by the polynomials $f^*$ with $f\in I_S$ and $f^*$
being the homogeneous summand of $f$ of the smallest degree. Thus,
the tangent cone is Cohen-Macaulay if this quotient ring is also.
Similarly, we can study the Hilbert function of the local ring
associated to $V_S$ by means of this quotient ring, since the
Hilbert function of the local ring is by definition the Hilbert
function of the associated graded ring. Finally, we can find a
minimal generating set for ${I_S}^*$ by computing a minimal
standard basis of $I_S$ with respect to a local order. For further
inquiries and notations to be used, we refer to \cite{singular}.

Assume now that $V_{S_{\ell,\m}} \subset A^{n+1}$ is an extension
of $V_S$, for suitable $\ell$ and $\m$. Then, by Proposition
\ref{affine}, we know that $I_{S_{\ell,\m}}=I_S+\langle F
\rangle$, where $F=x_{n+1}^{\ell}-x_1^{s_1}\cdots x_n^{s_n}$.
\begin{prop}\label{stdbasis} If $\G$ is a minimal standard basis of $I_S$ with respect to a negative degree reverse lexicographic ordering $\succ$ and $\ell \leq \Delta(\m)$, then $\G \cup \{F\}$ is a minimal standard basis of $I_{S_{\ell,\m}}$ with respect to a negative degree reverse lexicographic ordering refining $\succ$ and making $x_{n+1}$ the biggest variable.
\end{prop}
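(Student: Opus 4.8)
The plan is to carry out the standard-basis analogue of the argument used for Proposition \ref{bad}. Since the ideal $I_S+\langle x_{n+1}^{\ell}-x_1^{s_1}\cdots x_n^{s_n}\rangle$ does not depend on the chosen expression $\m=s_1\m_1+\cdots+s_n\m_n$ with $s_i\in\N$, I would first fix one for which $s_1+\cdots+s_n=\Delta(\m)$; then Proposition \ref{affine} still gives $I_{S_{\ell,\m}}=I_S+\langle F\rangle$ with $F=x_{n+1}^{\ell}-x_1^{s_1}\cdots x_n^{s_n}$ and $\deg(x_1^{s_1}\cdots x_n^{s_n})=\Delta(\m)$. Let $\succ'$ be the negative degree reverse lexicographic ordering on $K[x_1,\dots,x_{n+1}]$ that refines $\succ$ and makes $x_{n+1}$ the biggest variable. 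The crucial use of the hypothesis $\ell\le\Delta(\m)$ is the identification $\LM(F)=x_{n+1}^{\ell}$: as $\deg(x_{n+1}^{\ell})=\ell\le\Delta(\m)=\deg(x_1^{s_1}\cdots x_n^{s_n})$, the monomial $x_{n+1}^{\ell}$ is $\succ'$-leading, by degree when $\ell<\Delta(\m)$ and, when $\ell=\Delta(\m)$, by the reverse-lexicographic tie-break because $x_{n+1}$ is the biggest variable. I would also note that each $\LM(G_i)$ is computed under $\succ'$ exactly as under $\succ$ and lies in $K[x_1,\dots,x_n]$.

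Granting this, minimality of $\G\cup\{F\}$ is immediate: among the monomials $\LM(G_1),\dots,\LM(G_k)\in K[x_1,\dots,x_n]$ and $\LM(F)=x_{n+1}^{\ell}$, none divides another — for the $\G$-pairs because $\G$ is minimal, and for the mixed pairs because $\LM(G_i)$ and $x_{n+1}^{\ell}$ are coprime (and $\LM(G_i)\ne 1$, since $G_i\in I_S$ is not a unit). For the standard-basis property I would invoke the standard-basis version of Buchberger's criterion with respect to the local ordering $\succ'$, so that it suffices to exhibit a weak standard representation of $\operatorname{spoly}(h,h')$ modulo $\G\cup\{F\}$ for every pair. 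For a pair $G_i,G_j$ this is inherited from the hypothesis that $\G$ is a standard basis of $I_S$: a weak standard representation of $\operatorname{spoly}(G_i,G_j)$ over $K[x_1,\dots,x_n]$ with respect to $\succ$ is still one over $K[x_1,\dots,x_{n+1}]$ with respect to $\succ'$, since $\succ'$ restricts to $\succ$ and no $x_{n+1}$ occurs. For a pair $G_i,F$, writing $G_i=\LM(G_i)-N_i$ and using $\gcd(\LM(G_i),\LM(F))=1$, the usual coprime-leading-term computation gives
$$\operatorname{spoly}(G_i,F)=x_{n+1}^{\ell}G_i-\LM(G_i)F=x_1^{s_1}\cdots x_n^{s_n}\,G_i-N_i\,F ,$$
whose right-hand side, when expanded, is the difference of the two monomials $x_1^{s_1}\cdots x_n^{s_n}\LM(G_i)$ and $N_i x_{n+1}^{\ell}$; these are distinct (the first lies in $K[x_1,\dots,x_n]$, the second is divisible by $x_{n+1}$), so no top-term cancellation occurs and $\operatorname{spoly}(G_i,F)=x_1^{s_1}\cdots x_n^{s_n}G_i+(-N_i)F$ is the required weak standard representation. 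This would finish the proof.

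The point I expect to demand the most care is the passage to the local ordering: over a negative degree order a monomial can be divisible by the leading monomial of the very polynomial in which it occurs, so plain leading-term reduction need not terminate, and this is precisely why I would argue through weak standard representations and the Mora form of Buchberger's criterion rather than through explicit normal-form reductions. The only other delicate spot is the boundary case $\ell=\Delta(\m)$ in the computation of $\LM(F)$, where one really must use both the hypothesis $\ell\le\Delta(\m)$ and the choice of an expression of $\m$ attaining $\Delta(\m)$: without $\ell\le\Delta(\m)$ the monomial $x_1^{s_1}\cdots x_n^{s_n}$ is the $\succ'$-leading term of $F$ for every expression of $\m$, and it may then be divisible by some $\LM(G_i)$, so that $\G\cup\{F\}$ is no longer minimal.
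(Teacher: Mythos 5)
Your proof is correct and follows essentially the same route as the paper: both arguments rest on $\gcd(\LM(G_i),\LM(F))=1$ to dispose of the mixed $s$-polynomials and inherit the remaining pairs from the hypothesis that $\G$ is a standard basis. You merely make explicit two points the paper leaves implicit — the choice of a representation of $\m$ attaining $\Delta(\m)$ so that $\LM(F)=x_{n+1}^{\ell}$, and the explicit weak standard representation in place of an appeal to the product criterion for local orderings.
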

\begin{proof} Let $\G'=\G \cup \{F\}$. Since $NF({\rm spoly}(f,g) \vert \G)=0$, for all $f,g\in \G$, we have $NF({\rm spoly}(f,g) \vert \G')=0$.
Since $\LM(f) \in K[x_1,\dots,x_n]$ and $\LM(F)=x_{n+1}^{\ell}$,
it follows at once that $gcd(\LM(f),\LM(F))=1$, for every $f\in
\G$. Thus, we get $NF({\rm spoly}(f,F) \vert \G')=0$, for any
$f\in \G$. This reveals that $\G'$ is a standard basis with
respect to the afore mentioned local ordering and it is minimal
because of the minimality of $\G$.
\end{proof}

\begin{thm}\label{cone} If $V_S \subset A^{n}$ has a Cohen-Macaulay (Gorenstein) tangent cone at $0$, then so have its extensions $V_{S_{\ell,\m}} \subset A^{n+1}$, provided that $\ell \leq \Delta(\m)$.
\end{thm}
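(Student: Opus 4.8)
The plan is to read off the tangent cone ideal of the extension from the standard basis produced in Proposition \ref{stdbasis}, and then to identify the associated graded ring of $V_{S_{\ell,\m}}$ with a hypersurface section, in the new variable $x_{n+1}$, of the associated graded ring of $V_S$; Cohen--Macaulayness and Gorensteinness will then transfer formally. Concretely, I would first fix a representation $\m=s_1\m_1+\cdots+s_n\m_n$ with $s_1+\cdots+s_n=\Delta(\m)$, so that in Proposition \ref{affine} one may take $F=x_{n+1}^{\ell}-x_1^{s_1}\cdots x_n^{s_n}$ with $\deg\bigl(x_1^{s_1}\cdots x_n^{s_n}\bigr)=\Delta(\m)\ge\ell$. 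Let $\G$ be a minimal standard basis of $I_S$ with respect to a negative degree reverse lexicographic order $\succ$. By Proposition \ref{stdbasis}, $\G\cup\{F\}$ is a minimal standard basis of $I_{S_{\ell,\m}}$ with respect to a negative degree reverse lexicographic order refining $\succ$ and making $x_{n+1}$ the biggest variable, so the least-degree homogeneous summands of its elements generate the tangent cone ideal:
$$I_{S_{\ell,\m}}^{*}=I_S^{*}\,K[x_1,\dots,x_{n+1}]+\langle F^{*}\rangle.$$
By the choice of the $s_i$, the initial form of $F$ is $F^{*}=x_{n+1}^{\ell}$ when $\ell<\Delta(\m)$ and $F^{*}=F$ when $\ell=\Delta(\m)$; in either case $F^{*}$ is homogeneous and \emph{monic of degree $\ell$ in $x_{n+1}$}.

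Writing $A=K[x_1,\dots,x_n]/I_S^{*}$ for the associated graded ring of $V_S$ at $0$, the displayed formula then exhibits the associated graded ring of $V_{S_{\ell,\m}}$ as $A[x_{n+1}]/(F^{*})$. Since $A[x_{n+1}]$ is a polynomial extension of $A$, it is Cohen--Macaulay (respectively Gorenstein) exactly when $A$ is; and because $F^{*}$ is monic in $x_{n+1}$ and $A[x_{n+1}]$ is free as an $A$-module, a leading-coefficient argument shows that $F^{*}$ is a nonzerodivisor on $A[x_{n+1}]$. Killing a nonzerodivisor preserves Cohen--Macaulayness, and a quotient of a Gorenstein ring by a nonzerodivisor is Gorenstein, so $A[x_{n+1}]/(F^{*})$ --- hence the tangent cone of $V_{S_{\ell,\m}}$ at $0$ --- inherits whichever of the two properties $A$ has.

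The step I expect to matter is the one where the hypothesis $\ell\le\Delta(\m)$ is used: this is precisely what permits the choice of the exponents $s_i$ above and thereby forces $F^{*}$ to be monic in $x_{n+1}$, so that the associated graded ring of the extension really is a hypersurface section of $A$ in a genuinely new variable rather than a more delicate quotient (if $\ell>\Delta(\m)$ one would instead have $F^{*}=-x_1^{s_1}\cdots x_n^{s_n}\in A$, and the argument breaks). Granting Propositions \ref{affine} and \ref{stdbasis}, this is the only point needing care; the commutative-algebra transfer in the last step is routine.
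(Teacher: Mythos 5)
Your proposal is correct and follows essentially the same route as the paper: apply Proposition \ref{stdbasis} to obtain ${I_{S_{\ell,\m}}}^*={I_S}^*+\langle F^*\rangle$ with $F^*=x_{n+1}^{\ell}$ (if $\ell<\Delta(\m)$) or $F^*=F$ (if $\ell=\Delta(\m)$), observe that $F^*$ is a nonzerodivisor on $K[x_1,\dots,x_{n+1}]/{I_S}^*$, and transfer Cohen--Macaulayness (Gorensteinness) across the quotient by a nonzerodivisor. Your explicit insistence on choosing the representation $\m=s_1\m_1+\cdots+s_n\m_n$ with $s_1+\cdots+s_n=\Delta(\m)$, and the monic-in-$x_{n+1}$ justification of the nonzerodivisor claim, merely make precise points the paper leaves implicit.
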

\begin{proof} An immediate consequence of the previous result is that  ${I_{S_{\ell,\m}}}^*={I_S}^*+\langle F^* \rangle$, where $F^*$ is $x_{n+1}^{\ell}$
whenever $\ell < \Delta(\m)$ and is $F$ if $\ell = \Delta(\m)$. In
any case $F^*$ is a nonzerodivisor on
$K[x_1,\dots,x_{n+1}]/{I_S}^*$ and thus
$K[x_1,\dots,x_{n+1}]/{I_{S_{\ell,\m}}}^*$ is Cohen-Macaulay as
required. In particular, both tangent cones have the same
Cohen-Macaulay type.
\end{proof}

\begin{rem} Theorem \ref{cone} generalizes the results appeared in \cite[Proposition 4.1]{arslan} and \cite[Theorem 3.6]{pf} from toric curves to toric varieties of any dimension. Moreover, Hilbert functions of the local rings of these extensions are nondecreasing in this case supporting Rossi's conjecture.
\end{rem}

According to \cite{HRV}, a local ring is of homogeneous type if
its Betti numbers coincide with the Betti numbers of its
associated graded ring, considered as a module over itself. It is
interesting to obtain local rings of homogeneous type, since in
this case, for example, the local ring and its associated ring
will have the same depth and their Cohen-Macaulayness will be
equivalent since they always have the same dimension. It will also
be easier to get information about the depth of the symmetric
algebra in this case, see \cite{HRV,rossi}.

\begin{prop}\label{hom} If the local ring of $V_S \subset A^{n}$ is of homogeneous type, then its extensions will also have local rings of homogeneous type if and only if $\ell \leq \Delta(\m)$.
\end{prop}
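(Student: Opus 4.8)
The plan is to read off the Betti numbers of the local ring and of the associated graded (tangent cone) ring of $V_{S_{\ell,\m}}$ from those of $V_S$, via the presentation $I_{S_{\ell,\m}}=I_SR'+\langle F\rangle$ of Proposition \ref{affine}, where $R=K[x_1,\dots,x_n]$, $R'=R[x_{n+1}]$, and $F=x_{n+1}^{\ell}-x_1^{s_1}\cdots x_n^{s_n}$ with $s_1+\dots+s_n=\Delta(\m)$; recall that homogeneous type of the local ring means precisely that these two collections of Betti numbers (computed over the relevant regular rings, and unchanged by localization at the maximal ideal) coincide.

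For the ``if'' direction: $F$ is monic of degree $\ell$ in $x_{n+1}$ over $R/I_S$, hence a nonzerodivisor on $R'/I_SR'=(R/I_S)[x_{n+1}]$ and contained in the maximal ideal. Just as in the gluing discussion around Proposition \ref{affine} and Remark \ref{betti}, the mapping cone of multiplication by $F$ on a (base-changed) minimal free resolution of $R/I_S$ is then a \emph{minimal} free resolution of $R'/I_{S_{\ell,\m}}$, so $\beta_i(R'/I_{S_{\ell,\m}})=\beta_i(R/I_S)+\beta_{i-1}(R/I_S)$ for all $i$. When $\ell\le\Delta(\m)$, Proposition \ref{stdbasis} and the proof of Theorem \ref{cone} give ${I_{S_{\ell,\m}}}^{*}={I_S}^{*}R'+\langle F^{*}\rangle$ with $F^{*}=x_{n+1}^{\ell}$ if $\ell<\Delta(\m)$ and $F^{*}=F$ if $\ell=\Delta(\m)$; in either case $F^{*}$ is a nonzerodivisor on $(R/{I_S}^{*})[x_{n+1}]$ lying in the maximal ideal, so the same mapping cone argument yields $\beta_i(R'/{I_{S_{\ell,\m}}}^{*})=\beta_i(R/{I_S}^{*})+\beta_{i-1}(R/{I_S}^{*})$ for all $i$. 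Since $\beta_i(R/I_S)=\beta_i(R/{I_S}^{*})$ for all $i$ by hypothesis, comparing the two recursions gives $\beta_i(R'/I_{S_{\ell,\m}})=\beta_i(R'/{I_{S_{\ell,\m}}}^{*})$ for all $i$, i.e. the local ring of $V_{S_{\ell,\m}}$ is of homogeneous type.

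For the converse I would argue by contraposition and assume $\ell>\Delta(\m)$. The resolution above of $R'/I_{S_{\ell,\m}}$ did not use $\ell\le\Delta(\m)$, so still $\beta_1(R'/I_{S_{\ell,\m}})=\mu(I_S)+1$. But now $F^{*}=-x_1^{s_1}\cdots x_n^{s_n}$ is a monomial of degree $\Delta(\m)<\ell$ lying in $K[x_1,\dots,x_n]$, and with respect to the negative degree reverse lexicographic order of Proposition \ref{stdbasis} its leading monomial need not be coprime to the leading monomials of the elements of a minimal standard basis $\G$ of $I_S$, so $\G\cup\{F\}$ need no longer be a standard basis of $I_{S_{\ell,\m}}$. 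Completing it to a minimal standard basis, the $S$-polynomials of $F$ with the elements of $\G$ reduce to further binomials of $I_{S_{\ell,\m}}$ whose lowest-degree forms belong to ${I_{S_{\ell,\m}}}^{*}$; the crux is to show that at least one of them is a minimal generator of ${I_{S_{\ell,\m}}}^{*}$ not coming from a minimal generating set of ${I_S}^{*}R'+\langle x_1^{s_1}\cdots x_n^{s_n}\rangle$ — the latter being minimally generated by $\mu({I_S}^{*})+1=\mu(I_S)+1$ elements, since $x_1^{s_1}\cdots x_n^{s_n}\notin{I_S}^{*}$ by maximality of $\Delta(\m)$. This forces $\mu({I_{S_{\ell,\m}}}^{*})>\mu(I_S)+1=\beta_1(R'/I_{S_{\ell,\m}})$, so homogeneous type fails already at the first Betti number.

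I expect this last claim to be the main obstacle. New standard basis elements always appear when $\ell>\Delta(\m)$, but their lowest-degree forms could in principle be absorbed by the lower-degree generators already present; to rule this out one must exploit the binomial structure of $I_S$ together with the fact that each new binomial genuinely carries the power $x_{n+1}^{\ell}$ with $\ell>\Delta(\m)$, so that the relevant monomial component, of $x_1,\dots,x_n$-degree exceeding $\Delta(\m)$, cannot be rewritten through the degree-$\le\Delta(\m)$ generators of ${I_S}^{*}R'+\langle x_1^{s_1}\cdots x_n^{s_n}\rangle$; a careful bookkeeping of the $S$-degrees along the standard basis completion should make this rigorous.
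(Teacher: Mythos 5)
Your ``if'' direction is correct and is essentially the paper's own argument: run the nonzerodivisor/mapping-cone recursion of Remark \ref{betti} once on $I_{S_{\ell,\m}}=I_S+\langle F\rangle$ (Proposition \ref{affine}) and once on ${I_{S_{\ell,\m}}}^*={I_S}^*+\langle F^*\rangle$ (available precisely when $\ell\le\Delta(\m)$ by Proposition \ref{stdbasis}), observe that the two recursions are identical, and invoke the homogeneous-type hypothesis on $K[[S]]$ to match them term by term. No complaints there.

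The converse, however, contains a genuine gap, and you have flagged it yourself. Your whole contrapositive reduces to the claim that for $\ell>\Delta(\m)$ one has $\mu({I_{S_{\ell,\m}}}^*)>\mu(I_S)+1=\beta_1(K[[S_{\ell,\m}]])$, and your sketch stops exactly where this must be established: the assertion that some new standard-basis element arising from completing $\G\cup\{F\}$ contributes a minimal generator of ${I_{S_{\ell,\m}}}^*$ that cannot be absorbed by the old ones is precisely the statement to be proved, not a bookkeeping afterthought. Two of the intermediate steps also need care: the claim $x_1^{s_1}\cdots x_n^{s_n}\notin{I_S}^*$ does not follow from maximality of $\Delta(\m)$ alone without an argument, since ${I_S}^*$, unlike $I_S$, may well contain monomials (in Example 3.6(3) one has ${I_S}^*=\langle x_5^2,x_4,x_3^3x_5,x_3^6,x_1x_5\rangle$); and adjoining an element outside a minimally generated ideal does not automatically raise $\mu$ by exactly one. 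To be fair, the paper's own converse is also only a few lines: it notes that homogeneous type of the extension forces $\beta_1(GR[S_{\ell,\m}])=\beta_1(K[[S_{\ell,\m}]])$, identifies this with the equality ${I_{S_{\ell,\m}}}^*={I_S}^*+\langle F^*\rangle$, and asserts that this equality can hold only when $\ell\le\Delta(\m)$ --- so the decisive combinatorial fact is asserted rather than proved there too. But as a self-contained proof, your converse is incomplete at exactly the step you identify as ``the main obstacle.''
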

\begin{proof} Let $K[[S]]$ denote the local ring of $V_{S}$, i.e. the localization of the semigroup ring $K[S]=R/I_S$ at the origin, where $R=K[x_1,\dots,x_n]$. The Betti numbers of $K[[S]]$ and $K[S]$ is the same, since localization is flat. For the convenience of notation let us use $GR[S]$ for the associated graded ring corresponding to $V_S$ and $\beta_i(GR[S])$ for the Betti numbers of the minimal free resolution of $GR[S]=R/{I_S}^*$ over $R$.

Assume now that $K[[S]]$ is of homogeneous type, i.e.
$\beta_i(K[[S]])=\beta_i(GR[S])$, for all $i$. For any extension
$V_{S_{\ell,\m}} \subset A^{n+1}$ of $V_S$, we have from
Proposition \ref{affine} that $I_{S_{\ell,\m}}=I_S+\langle F
\rangle$, where $F=x_{n+1}^{\ell}-x_1^{s_1}\cdots x_n^{s_n}$.
Therefore, by Remark \ref{betti}, the Betti numbers are as follows

\begin{itemize}
   \item $\beta_1(K[[S_{\ell,\m}]])=\beta_1(K[[S]])+1$
   \item $\beta_i(K[[S_{\ell,\m}]])=\beta_i(K[[S]])+\beta_{i-1}(K[[S]]), \:2 \leq i \leq d=pd(K[[S]])$
   \item $\beta_{d+1}(K[[S_{\ell,\m}]])=\beta_d(K[[S]])$.
 \end{itemize}

If furthermore $\ell \leq \Delta(\m)$, Proposition \ref{stdbasis}
yields ${I_{S_{\ell,\m}}}^*={I_S}^*+\langle F^* \rangle$. Hence,
by Remark \ref{betti}, Betti numbers of $GR[S_{\ell,\m}]$ are
found as:

\begin{itemize}
   \item $\beta_1(GR[S_{\ell,\m}])=\beta_1(GR[S])+1$
   \item $\beta_i(GR[S_{\ell,\m}])=\beta_i(GR[S])+\beta_{i-1}(GR[S]), \:2 \leq i \leq d=pd(K[[S]])$
   \item $\beta_{d+1}(GR[S_{\ell,\m}])=\beta_d(GR[S])$.
 \end{itemize}
 It is obvious now that $\beta_i(GR[S_{\ell,\m}])=\beta_i(K[[S_{\ell,\m}]])$ for any $i$ and that local rings of extensions are of homogeneous type.

 The converse is rather trivial, since homogeneity of local rings of extensions force that $\beta_1(GR[S_{\ell,\m}])=\beta_1(K[[S_{\ell,\m}]])$, i.e. ${I_{S_{\ell,\m}}}^*={I_S}^*+\langle F^* \rangle$ which is possible only if $\ell \leq \Delta(\m)$.
\end{proof}

Finally, inspired by \cite[Theorem 3.1]{ams}, we consider
extensions of a toric variety whose local ring has a
non-decreasing Hilbert function and whose tangent cone is not
necessarily Cohen-Macaulay. The proof is a modification of that of
\cite[Theorem 3.1]{ams} and the reason for this is that there are
toric surfaces having non-decreasing Hilbert functions but having
Hilbert series expressed as a ratio of a polynomial with some
\textbf{negative} coefficients. The Hilbert series of the toric
variety in Example $3.6$ item ($3$) is such an example:
$$(1 + 3t + 6t^2 + 8t^3 + 9t^4 + 7t^5 + 3t^6 - t^8) / (1-t)^2.$$
\begin{thm}\label{hf} If $V_S \subset A^{n}$ has a local ring with non-decreasing Hilbert function, then so have its extensions $V_{S_{\ell,\m}} \subset A^{n+1}$, provided that $\ell \leq \Delta(\m)$.
\end{thm}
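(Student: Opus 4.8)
The plan is to reduce the statement about Hilbert functions of local rings to a comparison of Hilbert functions of the two associated graded rings $GR[S]=R/I_S^*$ and $GR[S_{\ell,\m}]=R[x_{n+1}]/I_{S_{\ell,\m}}^*$, since the Hilbert function of a local ring equals that of its associated graded ring by definition. By Proposition \ref{stdbasis}, because $\ell\leq\Delta(\m)$ we have the clean description $I_{S_{\ell,\m}}^*=I_S^*+\langle F^*\rangle$ with $F^*=x_{n+1}^{\ell}$ when $\ell<\Delta(\m)$ and $F^*=F=x_{n+1}^{\ell}-x_1^{s_1}\cdots x_n^{s_n}$ when $\ell=\Delta(\m)$. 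In either case $F^*$ is a homogeneous nonzerodivisor on $GR[S]$ (the monomial $x_{n+1}^{\ell}$ obviously is, since $GR[S]$ involves only $x_1,\dots,x_n$, and in the boundary case $F$ is a nonzerodivisor of degree $\delta(\m)=\Delta(\m)$ because no term order issue obstructs it — this follows exactly as in the proof of Theorem \ref{cone}). Thus $GR[S_{\ell,\m}]\cong GR[S]\otimes_K K[x_{n+1}]/(F^*)$ up to the nonzerodivisor relation, and its Hilbert series is $H_{GR[S_{\ell,\m}]}(t)=H_{GR[S]}(t)\cdot(1-t^{\ell})/(1-t)$, i.e. multiplication by $1+t+\cdots+t^{\ell-1}$.

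So the combinatorial heart of the argument is: if $(h_j)_{j\geq0}=\bigl(H_{GR[S]}(j)\bigr)$ is non-decreasing, then so is the convolution $h'_j=\sum_{i=0}^{\ell-1}h_{j-i}$ (with the convention $h_k=0$ for $k<0$). First I would write $h'_{j+1}-h'_j=\sum_{i=0}^{\ell-1}(h_{j+1-i}-h_{j-i})=h_{j+1}-h_{j-\ell+1}$, and then I would want to conclude this is $\geq0$. For $j+1\leq\ell$ this is $h_{j+1}-0=h_{j+1}\geq0$, fine. For $j+1>\ell$ we need $h_{j+1}\geq h_{j-\ell+1}$, which is immediate from monotonicity of $(h_j)$ since $j+1\geq j-\ell+1$. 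Hence $h'$ is non-decreasing, and since $h'$ is exactly the Hilbert function of the local ring of the extension, we are done. This telescoping is the same device used in the proof of \cite[Theorem 3.1]{ams}, adapted to work regardless of the signs of the coefficients of the $h$-polynomial — which is the whole point of re-proving it here rather than quoting a statement phrased in terms of $h$-vectors.

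The step I expect to require the most care is justifying that $F^*$ is a nonzerodivisor on $GR[S]$ in the boundary case $\ell=\Delta(\m)$, and more importantly that Proposition \ref{stdbasis} really does give $I_{S_{\ell,\m}}^*=I_S^*+\langle F^*\rangle$ on the nose rather than merely an inclusion; but both of these are already handled in the excerpt (the former in the proof of Theorem \ref{cone}, the latter being the content of Proposition \ref{stdbasis} together with the observation that a minimal standard basis computes the tangent cone ideal). With that in hand the proof is essentially the one-line convolution computation above. I would also remark, as in the surface example preceding the theorem, that the resulting Hilbert series $H_{GR[S]}(t)\,(1-t^{\ell})/(1-t)$ may well have numerator with negative coefficients even when $H_{GR[S]}(t)$ does not, which is precisely why an argument at the level of Hilbert \emph{functions} rather than $h$-vectors is needed.

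Here is the proof.

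\begin{proof}
The Hilbert function of the local ring of $V_{S_{\ell,\m}}$ is, by definition, the Hilbert function of its associated graded ring $GR[S_{\ell,\m}]=K[x_1,\dots,x_{n+1}]/I_{S_{\ell,\m}}^*$, and similarly for $V_S$. Since $\ell\leq\Delta(\m)$, Proposition \ref{stdbasis} gives $I_{S_{\ell,\m}}^*=I_S^*+\langle F^*\rangle$, where $F^*=x_{n+1}^{\ell}$ if $\ell<\Delta(\m)$ and $F^*=F=x_{n+1}^{\ell}-x_1^{s_1}\cdots x_n^{s_n}$ if $\ell=\Delta(\m)$. As observed in the proof of Theorem \ref{cone}, in either case $F^*$ is a nonzerodivisor on $GR[S]=K[x_1,\dots,x_{n+1}]/I_S^*$, and $F^*$ is homogeneous of degree $\ell$. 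Hence there is a short exact sequence of graded modules
$$0\longrightarrow GR[S](-\ell)\stackrel{F^*}{\longrightarrow} GR[S]\longrightarrow GR[S_{\ell,\m}]\longrightarrow 0,$$
which at the level of Hilbert series yields
$$H_{GR[S_{\ell,\m}]}(t)=\frac{1-t^{\ell}}{1-t}\,H_{GR[S]}(t)=(1+t+\cdots+t^{\ell-1})\,H_{GR[S]}(t).$$

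Write $h_j$ for the Hilbert function of $GR[S]$, with the convention $h_j=0$ for $j<0$, and $h'_j$ for that of $GR[S_{\ell,\m}]$. From the displayed identity, $h'_j=\sum_{i=0}^{\ell-1}h_{j-i}$. Therefore
$$h'_{j+1}-h'_j=\sum_{i=0}^{\ell-1}\bigl(h_{j+1-i}-h_{j-i}\bigr)=h_{j+1}-h_{j-\ell+1}.$$
If $j+1\leq\ell$, then $j-\ell+1\leq 0$, so $h_{j-\ell+1}=0$ unless $j=\ell-1$, in which case $h_{j-\ell+1}=h_0\le h_{j+1}=h_\ell$; in all these cases $h'_{j+1}-h'_j\geq 0$. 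If $j+1>\ell$, then $j+1\geq j-\ell+1\geq 1$ and the hypothesis that $(h_j)$ is non-decreasing gives $h_{j+1}\geq h_{j-\ell+1}$, so again $h'_{j+1}-h'_j\geq 0$. Thus $(h'_j)$ is non-decreasing, i.e. the local ring of every extension $V_{S_{\ell,\m}}$ with $\ell\leq\Delta(\m)$ has non-decreasing Hilbert function.
\end{proof}
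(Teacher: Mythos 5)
Your proof is correct, and the combinatorial core --- writing the Hilbert function of the extension as the convolution $h'_j=\sum_{i=0}^{\ell-1}h_{j-i}$ and telescoping to $h'_{j+1}-h'_j=h_{j+1}-h_{j-\ell+1}\ge 0$ --- is exactly the computation in the paper. Where you diverge is in how you arrive at the identity $H_{GR[S_{\ell,\m}]}(t)=(1+t+\cdots+t^{\ell-1})H_{GR[S]}(t)$. The paper passes to leading monomial ideals: it uses $\LM({I_{S_{\ell,\m}}}^*)=\LM({I_S}^*)+\langle x_{n+1}^{\ell}\rangle$ from Proposition \ref{stdbasis} and then invokes the Bayer--Stillman tensor decomposition $R'=R_1\otimes_K R_2$ with $R_2=K[x_{n+1}]/\langle x_{n+1}^{\ell}\rangle$, so that the series multiply. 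You instead use the short exact sequence coming from the fact that $F^*$ is a homogeneous nonzerodivisor of degree $\ell$ on $K[x_1,\dots,x_{n+1}]/{I_S}^*$; this is legitimate and arguably cleaner, since the nonzerodivisor property needs no Cohen--Macaulay hypothesis ($x_{n+1}^{\ell}$ is a variable power in a polynomial extension, and in the boundary case $\ell=\Delta(\m)$ the binomial $F$ is monic in $x_{n+1}$, hence a nonzerodivisor on $(K[x_1,\dots,x_n]/{I_S}^*)[x_{n+1}]$ over any base). Your route avoids citing \cite{bayer} at the cost of making the grading bookkeeping explicit. One small notational slip to fix: in the proof you set $GR[S]=K[x_1,\dots,x_{n+1}]/{I_S}^*$, but with that convention the exact sequence gives $H_{GR[S_{\ell,\m}]}(t)=(1-t^{\ell})H_{GR[S]}(t)$, whereas your displayed factor $(1-t^{\ell})/(1-t)$ is correct only when $H_{GR[S]}$ denotes the series of the $n$-variable ring $K[x_1,\dots,x_n]/{I_S}^*$ (the actual associated graded ring of the local ring of $V_S$); the extra $1/(1-t)$ accounts for adjoining $x_{n+1}$. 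The two statements are consistent once this is spelled out, and the rest of the argument goes through verbatim.
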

\begin{proof} Let $R=K[x_1,\dots,x_n]$. If $I$ is a graded ideal of $R$, then it is a standard fact that the Hilbert function of $R/I$ is just the Hilbert function of $R/\LM(I)$, where $\LM(I)$ is a monomial ideal consisting of the leading monomials of polynomials in $I$. Now, Proposition \ref{stdbasis} reveals that ${I_{S_{\ell,\m}}}^*={I_S}^*+\langle F^* \rangle$, where $F=x_{n+1}^{\ell}-x_1^{s_1}\cdots x_n^{s_n}$ and that $\LM({I_{S_{\ell,\m}}}^*)=\LM({I_S}^*)+\langle \LM(F^*) \rangle$. Since $\LM({I_S}^*)\subset R$ and $\LM(F^*)=x_{n+1}^{\ell}$ with respect to the local order mentioned in Proposition \ref{stdbasis}, it follows from the proof of \cite[Proposition 2.4]{bayer} that $R'=R_1 \otimes_K R_2$, where $$R'=R[x_{n+1}]/\LM({I_{S_{\ell,\m}}}^*),\,\,R_1=R/\LM({I_S}^*)\,\,\mbox{and}\,\, R_2=K[x_{n+1}]/\langle x_{n+1}^{\ell} \rangle.$$
Hilbert series of $R_1$ can be given as $\sum_{k \geq 0} a_k t^k$,
where $a_k \leq a_{k+1}$ for any $k\geq 0$, since from the
assumption the local ring associated to $V_S$ has non-decreasing
Hilbert function. It is clear that the Hilbert series of $R_2$ is
$h_2(t)=1+t+\dots+t^{\ell-1}$. Since the Hilbert series of $R'$ is
the product of those of $R_1$ and $R_2$, we observe that the
Hilbert series of $R'$ is given by
\begin{eqnarray*}\sum_{k \geq 0} b_k t^{k}&=&(1+t+\cdots+t^{\ell-1})\sum_{k \geq 0} a_k t^k\\
&=&\sum_{k \geq 0} a_k t^k+\sum_{k \geq 0} a_k t^{k+1}+\cdots+\sum_{k \geq 0} a_k t^{k+\ell-1}\\
&=&\sum_{k \geq 0} a_k t^k+ \sum_{k \geq 1} a_{k-1} t^k
+\cdots+\sum_{k \geq \ell-1} a_{k-\ell+1} t^k.
\end{eqnarray*}
Therefore, the Hilbert series $\sum_{k \geq 0} b_k t^{k}$ of $R'$
is given by
$$a_0+(a_0+a_1)t+\cdots+(a_0+\cdots+a_{\ell-2})t^{\ell-2}+ \sum_{k \geq \ell-1} (a_k+a_{k-1}+\cdots+a_{k-\ell+1}) t^k.$$
It is now clear that $b_k \leq b_{k+1}$, for any $0\leq k \leq
\ell-2$, from the first part of the last equality above, since
$a_k \leq a_{k+1}$. For all the other values of $k$, i.e. $k\geq
\ell-1$, we have $b_k-b_{k+1}=a_{k-\ell+1}-a_{k+1}\leq 0$ which
accomplishes the proof.
\end{proof}
\begin{eg} In the following, we will say that the extension is \textit{nice} if $\ell \leq \Delta(\m)$.
\begin{enumerate}
            \item The local ring of the affine cone of a projective toric variety is always of homogeneous type, for instance, $S=\{(3,0),(2,1),(1,2),(0,3)\}$ defines a projective toric curve in $\PP^3$ and its affine cone is the toric surface $V_S \subset \A^4$ with the homogeneous toric ideal $I_S=\langle x_2^2-x_1x_3, x_3^2-x_2x_4, x_2x_3-x_1x_4\rangle$. Thus by Proposition \ref{hom}, its affine nice extensions will have homogeneous type local rings which are not necessarily homogeneous. Take for example, $\ell=1$ and $\m=(0,3s)$ for any $s>1$. Then, although $I_{S_{\ell,\m}}=I_S+\langle x_4^s-x_5\rangle$ is not homogeneous, its local ring is of homogeneous type.
            \item Similarly, one can produce Cohen-Macaulay tangent cones using arithmetically Cohen-Macaulay projective toric varieties, since the toric ideal $I_S$ of their affine cones are homogeneous and thus $I_S={I_S}^*$. Therefore, all of their affine nice extensions will have Cohen-Macaulay tangent cones and local rings with non-decreasing Hilbert functions, by Theorem \ref{cone}. The toric variety $V_S \subset \A^4$ considered in the previous item (1) and its nice extensions illustrate this as well.
            \item Take $S=\{(6,0),(0,2),(7,0),(6,4),(15,0)\}$. Then it is easy to see that $I_S=\langle x_1x_2^2-x_4, x_3^3-x_1x_5, x_1^5-x_5^2\rangle$. Since $V_S \subset \A^5$ is a toric surface of codimension $3$, $I_S$ is a complete intersection and thus the local ring of $V_S$ is Gorenstein. But, the tangent cone at the origin, is determined by ${I_S}^*=\langle x_5^2,x_4, x_3^3x_5,x_3^6,x_1x_5\rangle$ and thus is not Cohen-Macaulay. Nevertheless, its Hilbert function $H_S$ is non-decreasing:\\
                $H_S(0)=1, H_S(1)=4, H_S(2)=8, H_S(3)=13, H_S(r)=6r-6, \: \mbox{for}\: r\geq 4.$
                Consider now all nice extensions of $V_S$; defined by the following semigroups $S_{\ell,\m}=\{(6\ell,0),(0,2\ell),(7\ell,0),(6\ell,4\ell),(15\ell,0),\m\}$.
                Therefore, Theorem \ref{hf} produces infinitely many new toric surfaces with local rings of dimension $2$ and embedding codimension $4$ whose Hilbert functions are non-decreasing even though their tangent cones are not Cohen-Macaulay. Indeed, one may produce this sort of examples in any embedding codimension by taking a sequence of nice extensions of the same example, since in each step the embedding codimension increases by one.
          \end{enumerate}
\end{eg}



\begin{thebibliography}{1}

\bibitem{arslan} F. Arslan, Cohen-Macaulayness of tangent cones,
Proc. Amer. Math. Soc. 128 (2000), 2243-2251.

\bibitem{pf} F. Arslan and P. Mete, Hilbert functions of Gorenstein monomial curves, Proc. Amer. Math. Soc.
135 (2007), 1993-2002.

\bibitem{ams} F. Arslan, P. Mete and M. \c{S}ahin, Gluing and Hilbert functions of monomial curves,
Proc. Amer. Math. Soc. 137 (2009), 2225-2232.

\bibitem{mac} D. Bayer and M. Stillman, Macaulay, A system for computations in
algebraic geometry and commutative algebra, 1992, available at
www.math.columbia.edu/~bayer/Macaulay.

\bibitem{bayer} D. Bayer and M. Stillman, Computation of Hilbert functions, J. Symbolic Comput. 14 (1992), 31-50.

\bibitem{elias} J. Elias, The Conjecture of Sally on the Hilbert Function for Curve Singularities, J. Algebra 160 (1993), 42-49.

\bibitem{ojeda} P. A. Garcia-Sanchez and I. Ojeda, Uniquely presented finitely generated commutative monoids, Pacific J. Math. 248 (2010),
91-105.

\bibitem{singular} G.-M. Greuel, G. Pfister, A Singular Introduction to Commutative
Algebra, Springer-Verlag, 2002.

\bibitem{HRV} J. Herzog, M. E. Rossi and G. Valla, On the depth of the symmetric algebra, Trans. Amer. Math. Soc. 296 (1986), 577-606.

\bibitem{mor} M. Morales, Noetherian symbolic blow-ups, J. Algebra 140 (1991), 12-25.

\bibitem{mt} M. Morales and A. Thoma, Complete intersection lattice ideals, J. Algebra 284 (2005), 755-770.

\bibitem{patil-tamone} D. P. Patil and G. Tamone, On the Cohen-Macaulayness of some graded rings,
J. Algebra Appl. 7 (2008), 109-128.

\bibitem{puthenpurakal} T. J. Puthenpurakal, The Hilbert function of a maximal
Cohen-Macaulay module, Math. Z. 251 (2005), 551-573.

\bibitem{rosales} J.C. Rosales, On presentations of subsemigroups of $\N^n$, Semigroup
Forum 55 (1997), 152-159.

\bibitem{rossi} M. E. Rossi and L. Sharifan, Minimal free resolution of a finitely generated module over a regular local ring,  J. Algebra  322  (2009), 3693-3712.

\bibitem{shibuta}  T. Shibuta, Cohen-Macaulayness of almost complete intersection tangent cones, J. Algebra 319 (2008), 3222-3243.

\bibitem{sahin} M. \c{S}ahin, Producing set-theoretic complete intersection monomial curves in $\PP^n$,
Proc. Amer. Math. Soc. 137 (2009), 1223-1233.

\bibitem{tamone} G. Tamone, On the Hilbert function of some non-Cohen-Macaulay graded rings,
Comm. Algebra 26 (1998), 4221-4231.

\bibitem{thoma} A. Thoma, Affine semigroup rings and monomial varieties, Comm. Algebra 24 (1996), 2463-2471.
\end{thebibliography}
\end{document}